\documentclass[12pt,twoside]{amsart}
\usepackage{amsmath}
\usepackage{amsthm}
\usepackage{amsfonts}
\usepackage{amssymb}
\usepackage{latexsym}
\usepackage{mathrsfs}
\usepackage{amsmath}
\usepackage{amsthm}
\usepackage{amsfonts}
\usepackage{amssymb}
\usepackage{latexsym}
\usepackage{geometry}
\usepackage{dsfont}
\usepackage[dvips]{graphicx}
\usepackage{color}
\usepackage[all]{xy}

\usepackage{amsthm,amsmath,amssymb}\usepackage{mathrsfs}

\date{}
\allowdisplaybreaks[4] \footskip=15pt
\renewcommand{\uppercasenonmath}[1]{}

\usepackage{graphicx,amssymb}
\usepackage[all]{xy}
\usepackage{amsmath}

\allowdisplaybreaks
\usepackage{amsthm}
\usepackage{color}

\theoremstyle{plain}
\newtheorem{theorem}{Theorem}[section]
\newtheorem{proposition}[theorem]{Proposition}
\newtheorem{lemma}[theorem]{Lemma}
\newtheorem{corollary}[theorem]{Corollary}
\theoremstyle{definition}
\newtheorem{example}[theorem]{Example}
\newtheorem{definition}[theorem]{Definition}
\newtheorem{question}[theorem]{Open Question}
\theoremstyle{definition}

\theoremstyle{remark}
\newtheorem{remark}[theorem]{Remark}

\newcommand{\pf}{\noindent\begin {proof}}
\newcommand{\epf}{\end{proof}}

\newcommand{\Ext}{\mbox{\rm Ext}}
\newcommand{\Hom}{\mbox{\rm Hom}}

\newcommand{\Prufer}{Pr\"{u}fer}

\newcommand{\FPR}{\mathcal{FPR}}

\newcommand{\Id}{\mathrm{Id}}

\newcommand{\Proj}{\mathcal{P}}

\def\fd{{\rm fd}}

\def\GV{{\rm GV}}
\def\tor{{\rm tor_{\rm GV}}}
\def\Hom{{\rm Hom}}
\def\Ext{{\rm Ext}}

\def\K{{\rm K}}

\def\H{{\rm H}}

\def\GV{{\rm GV}}
\def\Max{{\rm Max}}
\def\DW{{\rm DW}}

\def\gld{{\rm gld}}
\def\fPD{{\rm fPD}}
\def\fpD{{\rm fpD}}
\def\FPD{{\rm FPD}}

\def\pd{{\rm pd}}

\def\m{{\frak m}}

\def\grade{{\rm grade}}

\def\DQ{{\rm DQ}}

\def\id{{\rm id}}
\def\FP{{\rm FP\mbox{-}}}

\begin{document}
\begin{center}
{\large  \bf  The small finitistic dimensions of commutative rings, III}

\vspace{0.5cm}    Xiaolei Zhang$^{a}$\\

{\footnotesize a.\  School of Mathematics and Statistics, Tianshui Normal University, Tianshui 741001, China\\
 E-mail: zxlrghj@163.com\\}
\end{center}


\bigskip
\centerline { \bf  Abstract}   The small finitistic dimension $\fPD(R)$ of a ring $R$ is defined  to be the supremum of projective dimensions of $R$-modules with finite projective resolutions.  In this paper,  we show that a commutative ring $R$ has $\fPD(R)\leq d$ if and only if for any finitely generated ideal $I$ of $R$, if $\Ext_R^i(R/I,R)=0$ for each $i=0,\dots,d$, then $\Ext_R^i(R/I,R)=0$ for all $i\geq 0.$ As an application, we obtain that, for any commutative ring $R$, $\fPD(R)\leq \FP\Id_RR$, the latter is  the self-FP-injective dimension of $R$. We also give some applications of these results to (weak) $(n,d)$-rings, $\DW$-rings and rings of \Prufer\ type.
\bigskip
\leftskip10truemm \rightskip10truemm \noindent
\vbox to 0.3cm{}\\
{\it Key Words:} small finitistic dimension; self-FP-injective dimension; (weak) $(n,d)$-ring; \DW-ring; ring of \Prufer\ type.\\
{\it 2020 Mathematics Subject Classification:} 13D05.

\leftskip0truemm \rightskip0truemm
\bigskip
\section{introduction}
Throughout this paper, all rings are commutative rings with identity. Let $R$ be a ring. Let $S$ be a subset of $R$, we denote by $\langle S\rangle$ to be the ideal of $R$ generated by $S$.
Let $M$ be an $R$-module, we use $\pd_RM$ (resp., $\fd_RM$, $\id_R M$, $\FP\id_RM$) to denote the projective (resp., flat, injective, FP-injective) dimension of $M$ over $R$. Write $\gld(R)$ (resp., $w.\gld(R)$, $\FP\id_RR$) for the global dimension (resp., weak global dimenison, self-FP-injective dimension) of $R$. 

In ring theory, the homological dimension serves as a core invariant, measuring the complexity and structure of rings, and is essential for studying the homological algebra.
However, the classical homological dimensions of some common rings can be infinite. For example, a  Noetherian local ring has a finite (weak) global dimension only if it is regular.  To address this challenge and obtain meaningful finite invariants for a broader class of rings, Bass \cite{B60} introduced the concepts of the little finitistic (projective) dimension and the big finitistic projective dimension in his seminal work.   The little (resp., big) finitistic (projective) dimension of $R$, denoted by $\fpD(R)$ (resp., $\FPD(R)$), is defined to be the supremum of the projective dimensions of  all finitely generated (resp., all) $R$-modules $M$ with finite projective dimensions.

As to the little finitistic  dimension of a ring $R$, there are minimal advancements of it on the non-Noetherian rings since the syzygies of finitely generated modules are  not finitely generated over non-Noetherian rings in general.
To amend this gap, Glaz \cite{G89} proposed the notion of small finitistic dimension of a ring $R$, writted by $\fPD(R)$, which  is defined to be the supremum of projective dimensions of $R$-modules with finite projective resolutions (see Definition \ref{def-fPD}).

The studies of  small finitistic dimensions of commutative rings were motivated by two conjectures proposed by Glaz et al. \cite{CFFG14} who asked that is the small finitistic dimension of a \Prufer\ ring (resp., total ring of quotients) at most $1$ (resp., $0$)?   In 2020, Wang et al. \cite{wzcc20,fkxs20} characterized rings $R$ with $\fPD(R)=0$, and then gave an example of total ring of quotients with small finitistic dimensions larger than $1$ giving a negative answer to Glaz's questions. Furthermore, the authors \cite{z26,zw23} recently gave examples of total rings of quotients $R$ with $\fPD(R)=n$ for each $n\in \mathbb{N}\cup\{\infty\}$, completely denying Glaz's questions.

In fact, the authors \cite{z26,zw23} characterized the small finitistic dimension of rings by using special semi-regular  ideals, tilting theories and Koszul cohomologies in the two preceding articles of this series. However, these are also some questions can not be directly deduced by these characterizations. For example, it can easily be deduced by \cite[Corollary 5.5]{B62} that $\fPD(R)\leq \id_RR$ when $R$ is a Noetherian ring. Furthermore, we have the following question: 
\begin{center} \textbf{Question:} What's the relationship of  $\fPD(R)$  and  $(\FP)\id_RR$ for a general ring $R$?  
\end{center}
We know that a ring $R$ has $\fPD(R)\leq d$ if and only if any  finitely generated ideal $I$ that satisfies $\Ext_R^i(R/I,R)=0$ for each $i=0,\dots,d$ is exactly $R$ (see \cite[Theorem 3.1]{zw23}). However, from this equivalence, we don't know how $\Ext_R^i(R/I,R)$ behaves when $i$ is large enough when $\fPD(R)$ is finite, and this question is essential to study its self-(FP-)injective dimension. 

In this paper, to address the above question, we obtain a new characterization of rings with $\fPD(R)\leq d$, i.e., 
a ring $R$ has $\fPD(R)\leq d$ if and only if for any finitely generated ideal $I$ of $R$, if $\Ext_R^i(R/I,R)=0$ for each $i=0,\dots,d$, then $\Ext_R^i(R/I,R)=0$ for all $i\geq 0$ (see Theorem \ref{wnd}). We also give a characterization of rings with an infinite small finitistic dimension (see Corollary \ref{wnd-inf}). As applications, we show that $\fPD(R)\leq \FP\id_RR$ for any ring $R$ (see Theorem \ref{idRfpd}). We also give some applications of these results to (weak) $(n,d)$-rings, $\DW$-rings and rings of \Prufer\ type in Section 3.

\section{The main result}

 Let $R$ be a ring and $M$ be an $R$-module. Then $M$ is said to have a \emph{finite projective resolution}, denoted by $M\in\FPR$, if  there exist a non-negative integer $n$ and an exact sequence
$$0\rightarrow P_n\rightarrow P_{n-1}\rightarrow \dots\rightarrow P_1\rightarrow P_0\rightarrow M\rightarrow 0$$
with each $P_i$  finitely generated projective.

 In 1989, Glaz \cite{G89} introduced the notion of the small finitistic dimension of a ring $R$, making it more suitable for homologies in non-Noetherian rings than the little  finitistic dimension brought by Bass \cite{B60}.

\begin{definition}\label{def-fPD} \cite[Page 67]{G89}
The \emph{small finitistic (projective) dimension} of a ring $R$, denoted by $\fPD(R)$, is defined to be the supremum of the projective dimensions of $R$-modules in $\FPR$.
\end{definition}

If we denote $\Proj^{\leq n}$  to be the class of $R$-modules with projective dimensions at most $n$ in  $\FPR$, then $\fPD(R)\leq n$ if and only if $\FPR=\Proj^{\leq n}$.

Recently, Zhang \cite{z26}  characterized the small finitistic dimensions of a ring via the Koszul homology in the previous article of this series.  Since this paper also employs techniques related to the Koszul homology, we briefly review the relevant concepts.

Let $R$ be a ring, and $\textbf{x} = x_1, \dots, x_n$ be a finite sequence of elements in $R$. Let $\alpha = (i_1, \dots, i_p)$, $1 \leq i_1 < \cdots < i_p \leq n$, be an ascending sequence of integers with $0 \leq p \leq n$. The module $K_p(\textbf{x})$ is defined to be a free $R$-module with basis $e_{\alpha} = e_{i_1} \wedge \dots \wedge e_{i_p}$. Define an $R$-homomorphism
\begin{align*}
	d_p:&\ K_p(\textbf{x}) \rightarrow K_{p-1}(\textbf{x}) \\
	&\  e_{\alpha}\mapsto  \sum\limits_{j=1}^p (-1)^{j+1} x_{i_j} e_{i_1} \wedge \dots \wedge \widehat{e_{i_j}} \wedge \dots \wedge e_{i_p}
\end{align*}
where $\widehat{\ }$ denotes the deletion of the item. It is easy to verify that $d_{p-1} \circ d_p = 0$. Therefore, there is a finite complex, called the \emph{Koszul complex}:
$$
K_{\bullet}(\textbf{x}): \quad 0 \rightarrow K_n(\textbf{x}) \xrightarrow{d_n} K_{n-1}(\textbf{x}) \rightarrow \cdots \rightarrow K_1(\textbf{x}) \xrightarrow{d_1} K_0(\textbf{x}) \rightarrow 0
$$
of finitely generated free $R$-modules. 

For any $R$-module $M$, define $K^{\bullet}(\textbf{x}, M)$ (resp., $K_{\bullet}(\textbf{x}, M)$) to be the complex $\Hom_R(K_{\bullet}(\textbf{x}), M)$ (resp., $K_{\bullet}(\textbf{x}) \otimes_R M$). The $p$-th homologies, called \emph{Koszul cohomology} (resp., \emph{Koszul homology}), are denoted by $\H^p(\textbf{x}, M)$ (resp., $\H_p(\textbf{x}, M)$), respectively.
It is easy to verify that
\[
\H_0(\textbf{x}, M) \cong M/\textbf{x}M
\]
and
\[
\H_n(\textbf{x}, M) \cong \{ m \in M \mid \ x_i m = 0 \text{ for all } i \}.
\]

Let $I$ be an ideal of $R$ generated by $\textbf{x}$. Then, the \emph{Koszul grade} of $I$ on $M$ is defined as:
$$
\K.\grade_R(I, M) = \inf\{p \in \mathbb{N} \mid \H^p(\textbf{x}, M) \neq 0\}.
$$
Note that the Koszul grade does not depend on the choice of generating sets of $I$ by \cite[Corollary 1.6.22 and Proposition 1.6.10(d)]{BH93}.

\begin{lemma}\cite[Theorem 4.2.8]{S90} $($Koszul duality$)$\label{Koszulduality}
For every module \( M \), there is an isomorphism of complexes
\[
\xymatrix@C=1.2em@R=1.5em{
0 \ar[r] & K_n(\textbf{x}, M) \ar[r] \ar[d]_{\phi_n}
& \cdots \ar[r]
& K_p(\textbf{x}, M) \ar[r]^{d_p^M} \ar[d]^{\phi_p}
& K_{p-1}(\textbf{x}, M) \ar[r] \ar[d]^{\phi_{p-1}}
& \cdots \ar[r]
& K_0(\textbf{x}, M) \ar[r] \ar[d]^{\phi_0}
& 0 \\
0 \ar[r] & K^0(\textbf{x}, M) \ar[r]
& \cdots \ar[r]
& K^{n-p}(\textbf{x}, M) \ar[r]_{d_M^{n-p+1}}
& K^{n-p+1}(\textbf{x}, M) \ar[r]
& \cdots \ar[r]
& K^n(\textbf{x}, M) \ar[r]
& 0
}
\]
In particular, $\H_p(\textbf{x},M) = \H^{n-p}(\textbf{x},M)$ for all $p$.
\end{lemma}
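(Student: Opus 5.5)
The plan is to construct an explicit isomorphism of graded modules $\phi_p\colon K_p(\textbf{x},M)\to K^{n-p}(\textbf{x},M)$ and then check that it commutes with the differentials up to a sign that can be absorbed into $\phi_p$. Since $K_p(\textbf{x})$ is free with basis $\{e_\alpha\}$ indexed by $p$-element subsets $\alpha\subseteq\{1,\dots,n\}$, we have
\[
K_p(\textbf{x},M)=K_p(\textbf{x})\otimes_R M\cong\bigoplus_{|\alpha|=p} M\,e_\alpha .
\]
Likewise $K^{n-p}(\textbf{x},M)=\Hom_R(K_{n-p}(\textbf{x}),M)\cong\bigoplus_{|\beta|=n-p}M\,e_\beta^{*}$, where $e_\beta^*$ is the dual basis. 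For $\alpha$ with complement $\alpha^c$, set
\[
\phi_p(e_\alpha\otimes m)=\sigma(\alpha)\,e^{*}_{\alpha^c}\,m,
\]
where $\sigma(\alpha)=\pm1$ is the sign with $e_\alpha\wedge e_{\alpha^c}=\sigma(\alpha)\,e_1\wedge\cdots\wedge e_n$. Equivalently, $\phi_p(\omega\otimes m)$ is the functional $\eta\mapsto c(\omega\wedge\eta)\,m$, where $c$ is the coefficient of $e_1\wedge\dots\wedge e_n$. Complementation is a bijection from $p$-subsets to $(n-p)$-subsets, so each $\phi_p$ is an isomorphism.

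The main step is commutativity, $d_M^{\,n-p+1}\circ\phi_p=\pm\,\phi_{p-1}\circ d_p^M$, and this is where I expect the obstacle to lie: the sign bookkeeping. I would evaluate both sides on $e_\alpha\otimes m$ and then on a basis vector $e_\gamma$ with $|\gamma|=n-p+1$. The coboundary is $d^{\,n-p+1}_M(f)=f\circ d_{n-p+1}$, so the left side applied to $e_\gamma$ is $c(e_\alpha\wedge d(e_\gamma))m$. The right side applied to $e_\gamma$ is $c(d(e_\alpha)\wedge e_\gamma)m$. Both vanish unless $\gamma=\alpha^c\cup\{i\}$ for a single $i\in\alpha$, and in that case each equals $\pm x_i m$.

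To fix the signs uniformly I would use the Leibniz rule: $d$ is a derivation of degree $-1$ on $\bigwedge K_1$, so
\[
d(e_\alpha\wedge e_\gamma)=d(e_\alpha)\wedge e_\gamma+(-1)^{p}\,e_\alpha\wedge d(e_\gamma).
\]
The left-hand side lives in degree $n+1$ and is therefore $0$. This gives $c(d(e_\alpha)\wedge e_\gamma)=(-1)^{p+1}c(e_\alpha\wedge d(e_\gamma))$, i.e.\ the square commutes up to the sign $(-1)^{p+1}$. Replacing $\phi_p$ by $\epsilon_p\phi_p$, with $\epsilon_{p-1}=(-1)^{p+1}\epsilon_p$ (say $\epsilon_p=(-1)^{p(p+1)/2}$, up to an overall convention), makes every square commute strictly while each map remains an isomorphism.

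Since the $\phi_p$ form an isomorphism of complexes, they induce isomorphisms on homology: $\H_p(\textbf{x},M)$ is the homology at $K_p(\textbf{x},M)$, which corresponds to the position $K^{n-p}(\textbf{x},M)$, giving $\H_p(\textbf{x},M)\cong\H^{n-p}(\textbf{x},M)$. Naturality in $M$ is clear, because $\phi_p$ is defined on $K_p(\textbf{x})$ and then tensored with $M$.
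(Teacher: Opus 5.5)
Your proof is correct and is the standard argument for Koszul self-duality (the wedge pairing into $\bigwedge^n R^n\cong R$, the Leibniz rule giving commutativity up to the sign $(-1)^{p+1}$, then a sign renormalization of the $\phi_p$). The paper gives no proof of its own here and simply cites Strooker's Theorem 4.2.8.

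One small slip: your sample choice $\epsilon_p=(-1)^{p(p+1)/2}$ satisfies $\epsilon_{p-1}=(-1)^{p}\epsilon_p$ rather than your (correct) recursion $\epsilon_{p-1}=(-1)^{p+1}\epsilon_p$, so it already fails for $n=1$. No overall constant fixes that ratio; take $\epsilon_p=(-1)^{p(p-1)/2}$ instead, or set $\epsilon_n=1$ and solve the recursion downward.
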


For an ideal $J$ (not necessarily finitely generated) of $R$, the Koszul grade of $J$ on $M$ is defined as:
$$
\K.\grade_R(J, M) = \sup\{\K.\grade_R(I, M) \mid I \ \text{is a finitely generated subideal of } J\}.
$$ 

\begin{theorem}\cite[Theorem 3.4]{z26}
	Let $R$ be a ring. Then
	$$
	\fPD(R) = \sup\{\K.\grade(\m, R) \mid \m \in \Max(R)\}.
	$$
\end{theorem}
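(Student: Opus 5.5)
We prove the two inequalities separately. Throughout we use that the Koszul grade of a finitely generated ideal coincides with the polynomial (true) grade, as in \cite{BH93}. We also use that $\K.\grade_R(I,R)\leq n$ whenever $I=\langle x_1,\dots,x_n\rangle\neq R$, because $\H^n(\textbf{x},R)\cong R/I\neq 0$.

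\emph{Step 1: $\fPD(R)\geq \sup\{\K.\grade(\m,R)\}$.} Fix $\m\in\Max(R)$ and an integer $k$ with $k\leq \K.\grade(\m,R)$. By definition there is a finitely generated $I=\langle\textbf{x}\rangle\subseteq\m$, with $\textbf{x}=x_1,\dots,x_n$, such that $\H^i(\textbf{x},R)=0$ for all $i<k$.
\begin{itemize}
\item By the Koszul duality of Lemma \ref{Koszulduality}, $\H_p(\textbf{x},R)=0$ for all $p>n-k$. Hence the truncated Koszul complex $0\to K_n(\textbf{x})\to\cdots\to K_{n-k}(\textbf{x})$ is exact except at its right end.
\item Let $C=\mathrm{coker}(d_{n-k+1})$. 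Then $C$ has a finite free resolution of length $k$, so $C\in\FPR$ and $\pd_R C\leq k$.
\item Computing $\Ext$ with this resolution gives $\Ext_R^k(C,R)\cong K^n(\textbf{x},R)/\mathrm{im}(d^n)=\H^n(\textbf{x},R)\cong R/I$. This is nonzero since $I\subseteq\m$.
\item Therefore $\pd_RC=k$, and so $\fPD(R)\geq k$.
\end{itemize}
If $\K.\grade(\m,R)=\infty$, this works for every $k$, giving $\fPD(R)=\infty$.

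\emph{Step 2: $\fPD(R)\leq\sup\{\K.\grade(\m,R)\}$.} Let $M\in\FPR$ with $\pd_RM=k$, and take a finite free resolution
$$0\to F_k\xrightarrow{\varphi_k}F_{k-1}\to\cdots\to F_0\to M\to 0.$$
We may pass from finitely generated projective to finitely generated free modules by the usual Eilenberg trick, adding free summands. Let $r_k=\mathrm{rank}F_k$ and $J=I_{r_k}(\varphi_k)$, the ideal of $r_k\times r_k$ minors of $\varphi_k$.
\begin{itemize}
\item Because $\pd_RM=k$, the map $\varphi_k$ is not split injective. A map of free modules $F_k\to F_{k-1}$ is split injective exactly when its maximal minors generate $R$, so $J\neq R$.
\item Choose $\m\in\Max(R)$ with $J\subseteq\m$.
\item Apply Northcott's non-Noetherian form of the Buchsbaum--Eisenbud exactness criterion (from \emph{Finite Free Resolutions}). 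For an exact finite free complex, the ideals of minors of the appropriate ranks satisfy $\mathrm{grade}\,I_{r_i}(\varphi_i)\geq i$, with grade meaning polynomial grade.
\item This gives $\K.\grade(\m,R)\geq\K.\grade(J,R)\geq k$. Here we use the monotonicity of Koszul grade in the ideal, which is built into the definition for non-finitely generated ideals.
\end{itemize}
Taking suprema over all such $M$ yields the claim.

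\emph{Expected obstacle.} The main difficulty is Step 2. The Buchsbaum--Eisenbud criterion is classically Noetherian, and here one must rely on Northcott's version with polynomial grade together with its agreement with Koszul grade. One also has to make sure the ranks in the resolution are well defined and that the last rank $r_k$ is the correct one to use. If needed, we would first localize at each maximal ideal, where finitely generated projectives are free of constant rank. We would then check that the minors ideal is proper at some maximal ideal precisely because $\varphi_k$ fails to split there.
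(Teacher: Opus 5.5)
The paper gives no proof of this statement. It is quoted from \cite{z26} and used only as a black box in Corollary \ref{main}, so your argument has to stand on its own.

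Step 1 does. The truncated Koszul complex gives $C\in\FPR$ with $\pd_RC\le k$, and for $k\ge 1$ you correctly get $\Ext_R^k(C,R)\cong \H^n(\textbf{x},R)\cong R/I\neq 0$.

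Step 2 is also correct, \emph{provided} $M$ has a finite resolution by finitely generated free modules. The non-Noetherian Buchsbaum--Eisenbud criterion with polynomial grade is in \cite[Section 9.1]{BH93}, with $r_k=\mathrm{rank}\,F_k$. The ideal $I_{r_k}(\varphi_k)$ is proper precisely because $\varphi_k$ does not split.

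\textbf{The gap is that proviso.}
\begin{itemize}
\item The Eilenberg swindle only produces free modules of infinite rank.
\item A module in $\FPR$ need not have any finite free resolution. If a finitely generated projective $P$ had one, it would split and force $[P]\in\mathbb{Z}[R]$ in $K_0(R)$, i.e.\ $P$ stably free. This fails, for example, for a non-principal ideal of a Dedekind domain.
\end{itemize}

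Your fallback of localizing at $\m$ does not repair this. It yields $\K.\grade(\m R_\m,R_\m)\ge k$, which does not imply $\K.\grade(\m,R)\ge k$. The modules $\H^i(\textbf{y},R)$ need not be finitely generated, so their vanishing at $\m$ need not spread to a neighbourhood. For a concrete example, take the ring $R$ of sequences $(a_0;a_1,a_2,\dots)$ with $a_0\in k[[t]]$, $a_n\in k[t]/(t^n)$ and $a_n\equiv a_0$ for $n\gg 0$, and let $\m=\{a : a_0\in tk[[t]]\}$. Then $R_\m\cong k[[t]]$, yet every finitely generated subideal of $\m$ is killed by some nonzero $t^{n-1}e_n$. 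Hence $\K.\grade(\m,R)=0<1=\K.\grade(\m R_\m,R_\m)$.

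\textbf{Two ways to close the gap.}
\begin{enumerate}
\item[(i)] Replace $M$ by $M\oplus P$, where $P$ is finitely generated projective with $\sum_i(-1)^i[P_i]+[P]\in\mathbb{Z}[R]$; this does not change the projective dimension. Make the terms in degrees $0,\dots,k-1$ free by adding trivial complexes $Q_i\xrightarrow{=}Q_i$ in degrees $i+1,i$. The term in degree $k$ is then stably free. Adding a free module to the last two terms gives a finite free resolution of length $k=\pd_R(M\oplus P)$.
\item[(ii)] Alternatively, work on a basic open set. Choose $\m$ with $\pd_{R_\m}M_\m=k$, and $s\notin\m$ with every $(P_i)_s$ free. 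Apply the criterion over $R_s$. The numerators of the maximal minors of $(\varphi_k)_s$ generate a finitely generated $J_0$. It lies in $\m$ because $(\varphi_k)_\m$ does not split, and $\K.\grade(J_0R_s,R_s)\ge k$. Now set $J=J_0+zR$ with $z\in\m$ and $z\equiv 1\pmod{sR}$. Every prime containing $J$ lies in $D(s)$, so $\K.\grade(J,R)\ge k$, and therefore $\K.\grade(\m,R)\ge k$.
\end{enumerate}

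With either repair, your argument becomes a complete, direct Koszul-theoretic proof of a result the paper only cites.
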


\begin{corollary}\label{main}\cite[Theorem 3.1]{zw23} \cite[Theorem 3.4]{z26}
Let $R$ be a ring and $d\geq 0$. The following conditions are equivalent:
\begin{enumerate}
    \item   $\fPD(R)\leq d$;
     \item  any  finitely generated ideal $I$ that satisfies $\Ext_R^i(R/I,R)=0$ for each $i=0,\dots,d$ is $R$;
     \item $\K.\grade(I,R)\leq d$ for proper finitely generated ideal $I$ of  $R.$    
\end{enumerate}
\end{corollary}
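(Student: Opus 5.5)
The plan is to route everything through condition (1): take (1)$\Leftrightarrow$(2) from \cite[Theorem 3.1]{zw23}, and derive (1)$\Leftrightarrow$(3) from the displayed formula $\fPD(R)=\sup\{\K.\grade(\m,R)\mid \m\in\Max(R)\}$ of \cite[Theorem 3.4]{z26}. By that formula, (1)$\Leftrightarrow$(3) reduces to one identity of suprema:
$$\sup\{\K.\grade(\m,R)\mid \m\in\Max(R)\}=\sup\{\K.\grade(I,R)\mid I\subsetneq R \text{ finitely generated}\}.$$

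The inequality ``$\leq$'' comes straight from the definitions. For a maximal ideal $\m$, $\K.\grade(\m,R)$ is by definition the supremum of $\K.\grade(I,R)$ over finitely generated subideals $I\subseteq \m$. Every such $I$ is proper, so each of these numbers appears on the right-hand side.

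For ``$\geq$'', let $I$ be a proper finitely generated ideal. Choose a maximal ideal $\m\supseteq I$, which exists by Zorn's lemma. Since $I$ is itself a finitely generated subideal of $\m$, the definition of the Koszul grade of a non-finitely-generated ideal gives $\K.\grade(I,R)\leq \K.\grade(\m,R)$. In this direction we only use that $I$ is one of the ideals over which the supremum defining $\K.\grade(\m,R)$ is taken, so no monotonicity statement is needed.

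The one point that needs care, and the step I expect to be the main obstacle, is consistency: when $\m$ happens to be finitely generated, the value of $\K.\grade(\m,R)$ from the sup-definition must agree with the one from the Koszul complex on generators of $\m$. This requires the monotonicity $\K.\grade(I,R)\leq \K.\grade(J,R)$ for finitely generated ideals $I\subseteq J$. By the independence of generators (\cite[Corollary 1.6.22, Proposition 1.6.10]{BH93}), I would prove it by adjoining one element at a time: if $J$ is generated by $\textbf{x},y$ with $\textbf{x}$ generating $I$, the mapping-cone sequence $0\to K_\bullet(\textbf{x})\to K_\bullet(\textbf{x},y)\to K_\bullet(\textbf{x})[-1]\to 0$ gives, after applying $\Hom_R(-,R)$, a long exact sequence
$$\cdots\to \H^{p-1}(\textbf{x},R)\xrightarrow{\pm y}\H^{p-1}(\textbf{x},R)\to \H^{p}(\textbf{x},y;R)\to \H^{p}(\textbf{x},R)\to\cdots.$$
Hence $\H^p(\textbf{x},R)=0$ for all $p<g$ forces $\H^p(\textbf{x},y;R)=0$ for all $p<g$, and induction on the number of added generators gives monotonicity. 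With both inequalities in hand, (1)$\Leftrightarrow$(3) follows, and combined with \cite[Theorem 3.1]{zw23} all three conditions are equivalent.
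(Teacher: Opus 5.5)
Your proposal is correct and follows the same route as the paper: the paper takes (1)$\Leftrightarrow$(2) directly from \cite[Theorem 3.1]{zw23} and only asserts that (1)$\Leftrightarrow$(3) is easily deduced from $\fPD(R)=\sup\{\K.\grade(\m,R)\mid \m\in\Max(R)\}$. You supply the details the paper omits: the equality of the two suprema, using a maximal ideal containing $I$, and the monotonicity of Koszul grade via the mapping-cone long exact sequence, which makes the supremum definition agree with the Koszul-complex definition when $\m$ is finitely generated.
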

\begin{proof}
	The equivalence of (1) and (2) directly comes from \cite[Theorem 3.1]{zw23}, and  the equivalence of (1) and (3) can be easily deduced by  \cite[Theorem 3.4]{z26}. 
\end{proof}

Note that we cannot directly read off how $\Ext_R^i(R/I,R)$ behaves from Corollary \ref{main}(2) when $i$ is large enough. This problem is now addressed by the following main theorem.

\begin{theorem}\label{wnd}
Let $R$ be a ring and $d\geq 0$. Then $\fPD(R)\leq d$ if and only if for any finitely generated ideal $I$ of $R$, if $\Ext_R^i(R/I,R)=0$ for each $i=0,\dots,d$, then $\Ext_R^i(R/I,R)=0$ for all $i\geq 0.$
\end{theorem}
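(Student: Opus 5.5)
The sufficiency direction is immediate from Corollary \ref{main}. Suppose the stated condition holds and let $I$ be a finitely generated ideal with $\Ext_R^i(R/I,R)=0$ for $i=0,\dots,d$. By hypothesis $\Ext_R^i(R/I,R)=0$ for all $i\geq 0$. We then want to conclude $I=R$. If $I\neq R$, write $I=\langle x_1,\dots,x_n\rangle$. The Koszul complex computes $\H^n(\textbf{x},R)\cong R/IR\neq 0$ via Koszul duality (Lemma \ref{Koszulduality}, since $\H_0(\textbf{x},R)=R/I$). Hence $\K.\grade(I,R)\leq n<\infty$. The standard comparison says that the first nonvanishing $\Ext_R^i(R/I,R)$ occurs exactly at $i=\K.\grade(I,R)$ when this is finite (\cite[1.6.16-type results]{BH93}, or argued directly below), which contradicts the vanishing of all Ext. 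So $I=R$, and Corollary \ref{main}(2) gives $\fPD(R)\leq d$. To avoid citing the comparison, I would instead show directly that if all $\Ext^i_R(R/I,R)$ vanish then all $\Ext^i_R(R/I,R^m)$ vanish. Dimension shifting along $0\to R^{m}\to K^0\to\cdots$ then forces $\H^p(\textbf{x},R)=0$ for all $p$. This reduction is the same core lemma as in the necessity part.

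For necessity, assume $\fPD(R)\leq d$ and let $I$ be finitely generated with $\Ext_R^i(R/I,R)=0$ for $i\le d$. By Corollary \ref{main}(2), $I=R$, so $R/I=0$ and every $\Ext_R^i(R/I,R)$ vanishes trivially. So the theorem, read literally, is essentially a restatement of Corollary \ref{main}. The only real content lies in the sufficiency step: the condition ``all Ext vanish'' must force $I=R$.

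The key lemma I would prove is the following: if $I=\langle x_1,\dots,x_n\rangle$ is proper, then $\Ext_R^i(R/I,R)\neq 0$ for some $i\leq n$. The plan is to show that $\Ext_R^i(R/I,R)=0$ for all $i<k$ implies $\H^i(\textbf{x},R)=0$ for $i<k$. I would argue by induction on $k$, using that each $\H^i(\textbf{x},R)$ is annihilated by $I$, so it is an $R/I$-module. Given the vanishing for $i<k$, the truncated Koszul cochain complex $0\to K^0\to\cdots\to K^{k}$ is exact up to degree $k-1$. It therefore gives a partial injective-like coresolution of $R$ by finite free modules. Applying $\Hom(R/I,-)$ and comparing, one finds $\Hom_R(R/I,\H^k(\textbf{x},R))\hookrightarrow \Ext^k_R(R/I,R)$, or more precisely an isomorphism obtained by the usual spectral/dimension-shifting argument. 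Since $\H^k(\textbf{x},R)$ is a nonzero $R/I$-module, we have $\Hom_R(R/I,\H^k)=\H^k\neq0$. Taking $k=\K.\grade(I,R)\leq n$ then yields a nonzero $\Ext^k_R(R/I,R)$.

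The main obstacle I expect is this comparison between Ext and Koszul cohomology over a non-Noetherian ring. Dimension shifting through cokernels of the Koszul cochain complex needs the vanishing of $\Ext^j_R(R/I,R^{m})$ for the free terms, which is exactly the induction hypothesis because finite sums commute with Ext. I would carry it out carefully with short exact sequences $0\to Z^i\to K^i\to B^{i+1}\to0$, tracking the cocycles and coboundaries.
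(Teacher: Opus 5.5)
Your proposal is correct and follows the paper's route. The forward implication is just Corollary~\ref{main}; for the converse, vanishing of all $\Ext_R^i(R/I,R)$ forces every $\H^p(\textbf{x},R)$ to vanish, so $R/I\cong \H^n(\textbf{x},R)=0$ and Corollary~\ref{main} applies. The paper cites \cite[Theorem 6.1.6]{S90} for the Ext-to-Koszul step, whereas you prove it directly: dimension shifting through $0\to Z^i\to K^i\to B^{i+1}\to 0$ (with $Z^i=B^i$ for $i<k$) and $0\to B^k\to Z^k\to \H^k\to 0$ gives $\H^k(\textbf{x},R)=\Hom_R(R/I,\H^k(\textbf{x},R))\hookrightarrow\Ext_R^k(R/I,R)$ once the lower Ext and Koszul cohomology vanish, which is sound. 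Note that your opening sentence swaps the labels ``sufficiency'' and ``necessity''.
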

\begin{proof} Suppose $R$ has the small finitistic dimension at most $d$. Let $I$ be a finitely generated ideal of $R$ satisfying $\Ext_R^i(R/I,R)=0$ for each $i=0,\dots,d$. Then $I=R$ by Corollary \ref{main}. So  $\Ext_R^i(R/I,R)=0$ for all $i\geq 0.$

On the other hand, suppose that for any finitely generated ideal $I$ of $R$, if $\Ext_R^i(R/I,R)=0$ for each $i=0,\dots,d$, then $\Ext_R^i(R/I,R)=0$ for all $i\geq 0.$  Let $I$ be an such ideal of $R$. Then   $\Ext_R^i(R/I,R)=0$ for all $i\geq 0.$  It follows by \cite[Theorem 6.1.6]{S90} that $\K.\grade(I,R)=\infty$.
Assume that $I$ is generated by a finite sequence $\textbf{x}:=x_1,\dots,x_m$ of elements in $I$. Then we have the following Koszul complex generated by $\textbf{x}$
$$
K_{\bullet}(\textbf{x}): \quad 0 \rightarrow K_m(\textbf{x}) \xrightarrow{d_m} K_{m-1}(\textbf{x}) \rightarrow \cdots \rightarrow K_1(\textbf{x}) \xrightarrow{d_1} K_0(\textbf{x}) \rightarrow 0
$$
and Koszul cocomplex $K^{\bullet}(\textbf{x}):=\Hom_R(K_{\bullet}(\textbf{x}),R),$
$$
K^{\bullet}(\textbf{x}):\quad 0 \rightarrow K^0(\textbf{x}) \xrightarrow{d^0} K^{1}(\textbf{x}) \rightarrow \cdots \rightarrow K^{m-1}(\textbf{x}) \xrightarrow{d^{m-1}} K^m(\textbf{x}) \rightarrow 0
$$
It follows by Lemma \ref{Koszulduality} that 
 there is an isomorphism of complexes
\[
\xymatrix@C=1.2em@R=1.5em{
	0 \ar[r] & K_m(\textbf{x}) \ar[r] \ar[d]_{\phi_m}
	& \cdots \ar[r]
	& K_p(\textbf{x}) \ar[r]^{d_p} \ar[d]^{\phi_p}
	& K_{p-1}(\textbf{x}) \ar[r] \ar[d]^{\phi_{p-1}}
	& \cdots \ar[r]
	& K_0(\textbf{x}) \ar[r] \ar[d]^{\phi_0}
	& 0 \\
	0 \ar[r] & K^0(\textbf{x}) \ar[r]
	& \cdots \ar[r]
	& K^{m-p}(\textbf{x}) \ar[r]_{d^{m-p+1}}
	& K^{m-p+1}(\textbf{x}) \ar[r]
	& \cdots \ar[r]
	& K^m(\textbf{x}) \ar[r]
	& 0
}
\]
and so is an isomorphism of  homologies:$$\H_p(\textbf{x})\cong \H^{m-p}(\textbf{x})$$ for each $p=0,\dots,m.$
Since $\K.\grade(I,R)=\infty$ as above, we have $\H^{m}(\textbf{x})=0$ in special. Hence $$R/I\cong \H_0(\textbf{x})\cong \H^{m}(\textbf{x})=0.$$ Consequently, $I=R$ which implies that $\fPD(R)\leq d$ by Corollary \ref{main}.
\end{proof}

\begin{corollary}\label{wnd-inf}
	Let $R$ be a ring. Then $\fPD(R)=\infty$ if and only if for any positive integer $i$ large enough, there exists a  finitely generated ideal $I$ of $R$ such that $\Ext_R^i(R/I,R)\not=0$.
\end{corollary}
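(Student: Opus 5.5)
The plan is to obtain the corollary by negating Theorem \ref{wnd} for every $d\geq 0$ at once. The statement has to be read correctly first. I read ``$\Ext_R^i(R/I,R)\neq 0$'' as referring to the \emph{first} non-vanishing index. That is, I take the condition to be: for every sufficiently large $i$ there is a finitely generated ideal $I$ with $\Ext_R^j(R/I,R)=0$ for all $j<i$ and $\Ext_R^i(R/I,R)\neq 0$. In the language of \cite[Theorem 6.1.6]{S90}, this says $\K.\grade(I,R)=i$.

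The literal reading, with no vanishing below $i$, cannot be intended. For a Noetherian local ring that is not Gorenstein we have $\Ext_R^i(R/\m,R)\neq 0$ for all large $i$, yet $\fPD(R)$ equals the depth, which is finite. So I would fix the reading above and argue in both directions.

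For ``$\Rightarrow$'', suppose $\fPD(R)=\infty$, so $\fPD(R)\leq d$ fails for every $d$.
\begin{itemize}
\item By Theorem \ref{wnd}, for each $d$ there is a finitely generated $I_d$ with $\Ext_R^j(R/I_d,R)=0$ for $j=0,\dots,d$ but $\Ext_R^{j}(R/I_d,R)\neq 0$ for some $j$. Any such $j$ exceeds $d$.
\item Let $i_d$ be the least index with $\Ext_R^{i_d}(R/I_d,R)\neq 0$. Then $i_d>d$, so these first non-vanishing indices are unbounded.
\item To realize \emph{every} large $i$, and not just an unbounded set of them, I would use Corollary \ref{main}(3) together with \cite[Theorem 3.4]{z26}: $\fPD(R)=\infty$ means the Koszul grades $\K.\grade(I,R)$ of proper finitely generated ideals are unbounded.
\item I would then lower the grade one step at a time. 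Given $I$ with finite grade $g>0$, I would try to produce a finitely generated ideal of grade exactly $g-1$, for instance by passing to a suitable quotient, or by Koszul-complex manipulations with one generator deleted.
\end{itemize}
This interpolation is the step I expect to be the main obstacle. If it fails, I would settle for the weaker, still meaningful formulation: there are arbitrarily large $i$ with such an $I$.

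For ``$\Leftarrow$'', suppose instead that $\fPD(R)=d<\infty$, and take any finitely generated $I$ with $\Ext_R^j(R/I,R)=0$ for all $j\leq d$. Theorem \ref{wnd} then gives $\Ext_R^j(R/I,R)=0$ for all $j$. Hence no ideal has its first non-vanishing index greater than $d$, which contradicts the hypothesis for $i>d$. This direction is routine once the reading is fixed.
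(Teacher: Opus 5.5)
Your reading is the right repair. Your counterexample to the literal statement is valid. The paper's own ring $k[[x,y]]/(x^2,xy,y^2)$ from the Remark is an instance: $\fPD(R)=0$, yet $\Ext_R^i(R/\m,R)\neq 0$ for all $i$. The paper's proof of ``$\Leftarrow$'' is your argument, but stated under the literal reading, where it does not go through. The ideal supplied by the hypothesis need not satisfy $\Ext_R^j(R/I,R)=0$ for $j\le d$, so Theorem~\ref{wnd} gives no contradiction. Your first-non-vanishing-index version is the correct form of that direction.

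For ``$\Rightarrow$'' the paper does no more than your first two bullets. Its ``on the contrary'' hypothesis is a single $d$ beyond which every $\Ext_R^i(R/I,R)$ vanishes. That is the negation of `unboundedly many $i$', not of `all sufficiently large $i$'. So your interpolation step is a genuine gap, and the paper does not fill it either.

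The mechanism you suggest does not work as stated. Koszul grade is monotone in the ideal, but it can jump by more than one when a single generator is added. In $R=k\times k[u,v,w]$, the ideal generated by $(0,u),(0,v)$ is annihilated by $(1,0)$, so its Koszul grade is $0$. Adjoining $(1,w)$ gives $k\times(u,v,w)$, whose Koszul grade is $3$. Passing to $R/(x)$ lowers grade by one only in a different ring, and regular elements need not exist at all, e.g.\ in total rings of quotients.

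Unless you can prove that every value below an attained finite grade is itself attained by a finitely generated ideal of $R$, state the corollary in your fallback form. That form is: $\fPD(R)=\infty$ if and only if there are arbitrarily large $i$ for which some finitely generated $I$ has $\Ext_R^j(R/I,R)=0$ for $j<i$ and $\Ext_R^i(R/I,R)\neq 0$. Equivalently, proper finitely generated ideals have unbounded $\K.\grade(I,R)$. Both of your directions prove this cleanly, and it is exactly what the paper's argument actually establishes.
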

\begin{proof} Suppose $\fPD(R)=\infty$. On contrary, assume that there a positive integer $d$ such that $\Ext_R^i(R/I,R)=0$ for any finitely ideal $I$ of $R$ and any integer $i>d$. Then $\fPD(R)\leq d$ by Theorem \ref{wnd}, which is a contradiction.
	
On the other hand, suppose that for any positive integer $i$ large enough, there exists a  finitely generated ideal $I$ of $R$ such that $\Ext_R^i(R/I,R)\not=0$. On contrary, assume that $\fPD(R)=d<\infty$. Then, by Theorem \ref{wnd},  any finitely generated ideal $I$ of $R$ such that $\Ext_R^i(R/I,R)=0$ for each $i=0,\dots,d$ satisfies $\Ext_R^i(R/I,R)=0$ for all $i\geq 0,$ which is also a contradiction.
\end{proof}

\section{Applications}

\subsection{fPD(R) VS FP-id(R)}

Recall that an $R$-module $M$ is called \emph{FP-injective} if $\Ext_R^{1}(N, M) = 0$ for all finitely presented modules $N$. For an $R$-module $M$, the \emph{FP-injective dimension} of $M$, denoted by $\FP\id_R M$, is defined to be the least nonnegative integer $n$ such that $\Ext_R^{n+1}(N, M) = 0$ for all finitely presented $R$-modules $N$. If no such $n$ exists, set $\FP\id_R M = \infty$. The number $\FP\id_{R} R$ is called the  \emph{self-FP-injective dimension} of a ring $R$. If we denote by $\id_{R} R$ the self-injective dimension of a ring $R$, then trivially $\FP\id_{R} R\leq \id_{R} R.$

It follows by \cite[Corollary 5.5]{B62} that $\fPD(R)\leq \id_RR$ when $R$ is a Noetherian ring. Now, we can generalized this result to a general ring. 

\begin{theorem}\label{idRfpd}
	Let $R$ be a ring. Then $\fPD(R)\leq \FP\id_RR$.
\end{theorem}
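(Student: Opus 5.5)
We may assume $\FP\id_RR=d<\infty$, since otherwise there is nothing to prove. The plan is to verify the criterion of Theorem \ref{wnd} with this $d$ and conclude that $\fPD(R)\leq d$.

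Let $I$ be a finitely generated ideal of $R$ with $\Ext_R^i(R/I,R)=0$ for each $i=0,\dots,d$. We must show that $\Ext_R^i(R/I,R)=0$ for all $i\geq 0$. The range $i\leq d$ holds by hypothesis, so only $i\geq d+1$ remains.

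Since $I$ is finitely generated, $R/I$ is finitely presented, being the cokernel of $R^m\to R$. By the definition of FP-injective dimension, $\FP\id_RR=d$ means that $\Ext_R^{d+1}(N,R)=0$ for every finitely presented $R$-module $N$.

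The one point to check is that this vanishing persists in all higher degrees $i>d+1$, even though higher syzygies of $N$ need not be finitely presented. I would handle this by dimension shifting on the injective side rather than on $N$. Take an injective resolution $0\to R\to E^0\to E^1\to\cdots$ and let $C=\mathrm{coker}(E^{d-2}\to E^{d-1})$ be the $d$-th cosyzygy (for $d=0$ take $C=R$). Then $\Ext_R^{1}(N,C)\cong\Ext_R^{d+1}(N,R)=0$ for every finitely presented $N$, so $C$ is FP-injective.

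It remains to know that an FP-injective module has $\Ext_R^j(N,-)=0$ for all $j\geq1$ whenever $N$ is finitely presented. For $N=R/I$ with $I$ only finitely generated, a second syzygy such as $\ker(R^m\to I)$ need not be finitely generated, so this does not follow immediately. This is the main obstacle. The fix is to use the standard fact, valid over an arbitrary ring, that FP-injective dimension can equivalently be defined by the vanishing of $\Ext^{n+1}_R(N,-)$ for all finitely presented $N$, or of $\Ext^{j}_R(N,-)$ for all $j>n$. Granting this, and applying it to $N=R/I$, we get $\Ext_R^i(R/I,R)=0$ for all $i\geq d+1$.

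Combining the two ranges, $\Ext_R^i(R/I,R)=0$ for all $i\geq0$, and Theorem \ref{wnd} gives $\fPD(R)\leq d=\FP\id_RR$.

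Alternatively, the obstacle can be avoided entirely. Corollary \ref{main}(2) only needs the conclusion $I=R$, and the proof of Theorem \ref{wnd} shows that vanishing of $\Ext$ up to degree $m$ suffices, where $m$ is the number of generators of $I$. One would then need vanishing only in degrees up to $m$. Since $R/I$ is resolved in those degrees by the Koszul-related finite free complex, its $\Ext$ is computed by finitely presented data, so the definition of $\FP\id_RR$ might apply directly. Still, the standard equivalence above is the cleaner route.
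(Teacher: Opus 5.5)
You take the paper's route: plug $\FP\id_RR=d$ into Theorem \ref{wnd}. You also isolate the one delicate point, which the paper's own proof passes over in a single clause (``as $R/I$ is finitely presented''). With the definition used here, $\FP\id_RR=d$ only gives $\Ext_R^{d+1}(N,R)=0$ for finitely presented $N$, while Theorem \ref{wnd} also needs $\Ext_R^i(R/I,R)=0$ for all $i\geq d+2$.

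The ``standard fact'' you grant to close this step is false over an arbitrary ring. The equivalence holds over coherent rings, where syzygies of finitely presented modules are finitely presented, but not in general. For example, let $R=k[x,y_1,y_2,\dots]/(xy_1,xy_2,\dots)$, $J=\mathrm{ann}_R(x)=(y_1,y_2,\dots)$, $J_n=(y_1,\dots,y_n)$, and $M=\bigoplus_n E(R/J_n)$, where $E(-)$ is the injective hull.
\begin{itemize}
\item $M$ is FP-injective: a map from a finitely generated submodule of $R^k$ lands in a finite subsum, which is injective.
\item The map $J\to M$, $r\mapsto(r+J_n)_n$, does not extend to $R$. An extension would be multiplication by an element with finitely many nonzero coordinates, forcing $J\subseteq J_N$ for some $N$.
\item Hence $\Ext_R^2(R/xR,M)\cong\Ext_R^1(R/J,M)\neq0$.
\end{itemize}
So knowing that your cosyzygy $C$ is FP-injective gives nothing beyond degree $d+1$, and you give no argument specific to the module $R$.

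Your fallback in the last paragraph also fails. The Koszul complex is not in general a resolution of $R/I$. Moreover, $R/I$ need not even be $2$-presented: $R/xR$ above is not, since $xR\cong R/J$ is not finitely presented. Finally, the Koszul-grade step in the proof of Theorem \ref{wnd} is fed by vanishing in all degrees, not just up to $m$.

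The statement is still true. The clean repair is to bypass Theorem \ref{wnd} and work inside $\FPR$, where syzygies stay finitely presented, so only degree $d+1$ is needed.

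Let $M\in\FPR$ with $n=\pd_RM$, and suppose $n\geq d+1$. Truncate a finite resolution of $M$ by finitely generated projectives at the $n$-th syzygy; this syzygy is projective because $\pd_RM=n$, and finitely generated as an image. This gives $0\to P_n\to P_{n-1}\to\cdots\to P_0\to M\to 0$ with every $P_i$ finitely generated projective. Write $\Omega^kM$ for the $k$-th syzygy in this resolution.

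First, $\Ext_R^n(M,R)\neq0$. Otherwise $\Ext_R^n(M,P_n)=0$, since $P_n$ is a summand of some $R^k$. As $\Ext_R^n(M,P_n)=\mathrm{coker}(\Hom_R(P_{n-1},P_n)\to\Hom_R(P_n,P_n))$, the map $P_n\to P_{n-1}$ would split, giving $\pd_RM\leq n-1$, a contradiction.

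On the other hand, dimension shifting gives $\Ext_R^n(M,R)\cong\Ext_R^{d+1}(\Omega^{n-d-1}M,R)$. Here $\Omega^{n-d-1}M\cong\mathrm{coker}(P_{n-d}\to P_{n-d-1})$ is finitely presented, so this group is $0$ because $\FP\id_RR=d$. This contradiction shows $\pd_RM\leq d$ for all $M\in\FPR$, i.e.\ $\fPD(R)\leq d$.

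This argument uses only what the definition of $\FP\id_RR$ actually provides, and it also repairs the paper's one-line argument.
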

\begin{proof}
	Assume that $\FP\id_RR=d.$ Let	$I$ be a finitely generated ideal  of $R$ such that $\Ext_R^i(R/I,R)=0$ for each $i=0,\dots,d$.
	Since $\FP\id_RR=d,$ we have $\Ext_R^i(R/I,R)=0$ for all $i\geq d+1$ as $R/I$ is finitely presented $R$-module.
	Hence $\fPD(R)\leq d$ by Theorem \ref{wnd}.	
\end{proof}

\begin{corollary}\label{idRfpd}
	Let $R$ be a ring. Then $\fPD(R)\leq \id_RR$.
\end{corollary}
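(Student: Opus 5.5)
The statement is an immediate consequence of Theorem \ref{idRfpd} ($\fPD(R)\leq \FP\id_RR$), combined with the trivial comparison $\FP\id_RR\leq \id_RR$ recorded just before that theorem. The plan is therefore a two-line chaining of inequalities, with one short justification of the comparison.

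First, we may assume $\id_RR=d<\infty$; otherwise there is nothing to prove. Then $\Ext_R^{i}(N,R)=0$ for every $R$-module $N$ and every $i\geq d+1$. In particular, this holds for every finitely presented $N$, so $\FP\id_RR\leq d$ directly from the definition of FP-injective dimension.

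Next, we apply Theorem \ref{idRfpd} to get $\fPD(R)\leq \FP\id_RR\leq d=\id_RR$.

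Alternatively, one can argue straight from Theorem \ref{wnd}. Let $I$ be a finitely generated ideal with $\Ext_R^i(R/I,R)=0$ for $i=0,\dots,d$. Since $\id_RR=d$, also $\Ext_R^i(R/I,R)=0$ for all $i\geq d+1$. Theorem \ref{wnd} then gives $\fPD(R)\leq d$.

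There is no real obstacle here. The only point to check is that the definitions of $\FP\id$ and $\id$ are compared using the same indexing convention ($\Ext^{n+1}$ vanishing), and they are.
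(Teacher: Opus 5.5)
Your proposal is correct and matches the paper's proof, which simply chains the preceding theorem $\fPD(R)\leq \FP\id_RR$ with the inequality $\FP\id_RR\leq \id_RR$. Your alternative route through Theorem~\ref{wnd} just repeats the proof of that theorem with $\id_RR$ in place of $\FP\id_RR$, and it is equally valid.
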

\begin{proof}
	This is because $\FP\id_RR\leq \Id_RR$ for any ring $R$.
\end{proof}

\begin{remark}
If $R$ is a Noetherian ring,  then $\id_RR=\FP\Id_RR=n<\infty$ if and only if $R$ is $n$-Gorenstein. And in this case,  $\fPD(R)=\FPD(R)=n$. Let $R$ be a non-Gorenstein Noetherian ring with Krull dimension equal to $n<\infty$. Then $n=\FPD(R)<(\FP)\Id_RR=\infty.$
\end{remark}
 The following example also shows that $\id_RR$ may be strictly smaller than  $\FPD(R)$.
\begin{example}
	Let $R = \prod\limits_{\aleph_n} k$ be a direct product of $\aleph_n$ ($n < \infty$) copies of a field $k$. Assume that $2^{\aleph_n} = \aleph_m$ with $n + 1 \leq m < \infty$. Then $\FPD(R) = \gld(R) = m + 1$ (see \cite{O73}). However, since the direct product of self-injective rings is also self-injective, we have  $(\FP)\Id_RR = 0$ for any $n$.
\end{example}

\subsection{fPD of weak (n,d)$\mbox{-}$rings} 
For a nonnegative integer \( n \), an \( R \)-module \( E \) is \( n \)-presented if there is an exact sequence \begin{center}
 \( F_n \to F_{n - 1} \to \cdots \to F_0 \to E \to 0 \),
 \end{center}
  in which each \( F_i \) is a finitely generated free \( R \)-module. In particular, ``0-presented'' means finitely generated and ``1-presented'' means finitely presented.

In 1994, Costa \cite{C94} introduced the concept of $(n,d)$-rings in order to parameterize the families of non-Noetherian rings. Subsequently, Zhou \cite{Z04} and Mahdou \cite{M06} independently proposed two weak versions of $(n,d)$-rings defined in terms of flat dimension and $n$-presented cyclic modules, respectively.

\begin{definition}
For nonnegative integers \( n \) and \( d \), a ring \( R \) is said to be
\begin{enumerate}
\item \cite{C94} an \emph{\( (n, d) \)-ring} if \( \text{pd}_R(E) \leq d \) for each \( n \)-presented \( R \)-module \( E \);
\item \cite{Z04} a \emph{weak \( (n, d) \)-ring in sense of Zhou} if \( \text{fd}_R(E) \leq d \) for each \( n \)-presented \( R \)-module \( E \).
\item \cite{M06} a \emph{weak \( (n, d) \)-ring in sense of Mahdou} if \( \text{pd}_R(E) \leq d \) for each \( n \)-presented cyclic \( R \)-module \( E \).
\end{enumerate}
\end{definition}

 


\begin{proposition}
	Every weak $(n,d)$-ring in sense of Zhou, hence every $(n,d)$-ring,  has the  small finitistic dimension at most $d$.
\end{proposition}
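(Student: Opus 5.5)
Since every $(n,d)$-ring is a weak $(n,d)$-ring in the sense of Zhou ($\fd_R E\leq \pd_R E$), it suffices to treat the Zhou case. The plan is to work directly with modules in $\FPR$ rather than through Theorem \ref{wnd}.

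Let $M\in\FPR$, with a finite resolution
$$0\rightarrow P_m\rightarrow \cdots\rightarrow P_1\rightarrow P_0\rightarrow M\rightarrow 0$$
by finitely generated projective modules. First I note that $M$ is $k$-presented for every $k$: we may splice in finitely generated free modules by adding suitable complements $P_i\oplus Q_i$ to make each term free, or simply extend the resolution by zeros. In particular $M$ is $n$-presented, so $\fd_RM\leq d$ by hypothesis.

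The key step is to pass from flat dimension to projective dimension for modules in $\FPR$. Take the $(d-1)$-st syzygy $K=\ker(P_{d-1}\rightarrow P_{d-2})$ (or $K=M$ if $d=0$). Then $K$ is flat, since $\fd_RM\leq d$. Moreover $K$ has a finite resolution
$$0\rightarrow P_m\rightarrow\cdots\rightarrow P_d\rightarrow K\rightarrow 0$$
by finitely generated projectives, and in particular $K$ is finitely presented. A finitely presented flat module is projective, so $K$ is projective and hence $\pd_RM\leq d$. (If $m<d$ there is nothing to prove.) Taking the supremum over $M\in\FPR$ gives $\fPD(R)\leq d$.

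Nothing here is really difficult. The only points to check are that the syzygy is finitely presented, which is immediate because the resolution consists of finitely generated projectives, and that $\fd_RM\leq d$ forces the $d$-th syzygy of any projective resolution to be flat, which is standard dimension shifting.

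If one prefers to stay within the paper's framework, there is an alternative route. Let $I$ be finitely generated with $\Ext_R^i(R/I,R)=0$ for $i=0,\dots,d$; using the Koszul complex one would show that $R/I\in\FPR$. But this detour is unnecessary, so I would use the direct argument above.
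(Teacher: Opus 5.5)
Your proof is correct and follows essentially the same argument as the paper: $M\in\FPR$ is $n$-presented, so $\fd_RM\leq d$, and the $d$-th syzygy in the given resolution is a finitely presented flat module, hence projective. One small slip: $\ker(P_{d-1}\rightarrow P_{d-2})$ (with $P_{-1}=M$) is the $d$-th syzygy, not the $(d-1)$-st, as you correctly say later.
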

\begin{proof}
	Let $R$ be a weak  $(n,d)$-ring in sense of Zhou, and let $M\in\FPR$. Then there exist an integer $n$ and an exact sequence
	$$0\rightarrow P_m\rightarrow P_{m-1}\rightarrow \dots\rightarrow P_1\rightarrow P_0\rightarrow M\rightarrow 0$$
	with each $P_i$  finitely generated projective.
Then $M$ is a $n$-presented  $R$-module, and hence $\fd_RM\leq d$. So there is an exact sequence $$0\rightarrow F_d\rightarrow P_{d-1}\rightarrow \dots\rightarrow P_1\rightarrow P_0\rightarrow M\rightarrow 0$$
with $F_d$ flat, and so is finitely generated projective as it is finitely presented. Consequently, $\fPD(R)\leq d.$
\end{proof}

\begin{proposition}
Every weak $(1,d)$-ring in sense of Mahdou has the  small finitistic dimension at most $d$.
\end{proposition}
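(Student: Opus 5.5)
The plan is to verify condition (2) of Corollary \ref{main}. Let $R$ be a weak $(1,d)$-ring in the sense of Mahdou, and let $I$ be a finitely generated ideal of $R$ with $\Ext_R^i(R/I,R)=0$ for each $i=0,\dots,d$. We must show $I=R$.

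The key observation is that $R/I$ is a cyclic $1$-presented (finitely presented) module. It is cyclic by definition, and it is finitely presented because $I$ is finitely generated, which gives an exact sequence $R^m\rightarrow R\rightarrow R/I\rightarrow 0$. The hypothesis then gives $\pd_R(R/I)\leq d$.

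Next I would show that a module $N$ with $\pd_RN\leq d$ and $\Ext^i_R(N,R)=0$ for $i\le d$ must vanish, at least when $N$ is finitely presented. The approach is the standard one.

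First, reduce to a resolution by finitely generated projectives. Take the finitely presented syzygies $R\leftarrow R^m$. The kernel $K$ of $R^m\to I$ need not be finitely generated, so instead I use the following fact: if a module has a finite projective resolution of length $d$ and is finitely presented, then it has a resolution of length $d$ by finitely generated projectives. This fails in general, so a different route is needed.

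Alternative route: avoid resolutions altogether and use Theorem \ref{wnd}. Since $\pd_R(R/I)\le d$, we have $\Ext_R^i(R/I,R)=0$ for all $i\ge d+1$. Combined with the hypothesis that $\Ext^i_R(R/I,R)=0$ for $i=0,\dots,d$, this gives $\Ext^i_R(R/I,R)=0$ for all $i\ge0$. The condition of Theorem \ref{wnd} is therefore satisfied for every finitely generated ideal $I$. Hence $\fPD(R)\le d$.

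The main point to check is that $R/I$ really is $1$-presented whenever $I$ is finitely generated; this follows directly from the definition. No further obstacle is expected, and the first route is unnecessary.
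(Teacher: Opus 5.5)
Your final argument is correct and is the paper's proof: since $I$ is finitely generated, $R/I$ is a finitely presented cyclic module, so $\pd_R(R/I)\le d$ gives $\Ext_R^i(R/I,R)=0$ for $i\ge d+1$, and together with the assumed vanishing for $i=0,\dots,d$ this verifies the criterion of Theorem~\ref{wnd}. The abandoned first route about finite resolutions by finitely generated projectives plays no role and can be deleted.
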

\begin{proof} Let $R$ be a weak $(1,d)$-ring in sense of Mahdou, and $I$ be a finitely generated ideal of $R$. Then $R/I$ is a finitely presented cyclic \( R \)-module. And so $\pd_R(R/I)\leq d$. Assume that $\Ext_R^i(R/I,R)=0$ for each $i=0,\dots,d$ . Then, trivially, $\Ext_R^i(R/I,R)=0$ for all $i\geq 0.$ Consequently, $\fPD(R)\leq d.$
\end{proof}

In general, we propose the following open question:
\begin{question}
Does every weak $(n,d)$-ring in sense of Mahdou  have the small finitistic dimension at most $d$?
\end{question}





	

\subsection{New Characterizations of DW-rings} Recall from \cite{fk16} that  a finitely generated ideal $J$ of $R$ is said to be a \emph{$\GV$-ideal} provided that $\Hom_R(R/J,R)=\Ext_R^1(R/J,R)=0$. The set of all $\GV$-ideals of $R$ is denoted by $\GV(R)$.
For an $R$-module $M$, the \emph{$\GV$-torsion submodule} of $M$ is defined to be
\[
\tor(M)
:=\{\,x\in M \mid \exists\,J\in \GV(R)\ \text{such that } Jx=0\,\}.
\]
If $\tor(M)=M$ (resp., $\tor(M)=0$), then $M$ is called a \emph{$\GV$-torsion module} (resp., \emph{$\GV$-torsion-free module}).  An $\GV$-torsion-free $R$-module is said to be a \emph{$w$-module} if $\Ext_R^1(R/J,M)=0$ for any $J\in\GV(R)$.

A ring $R$ is said to be a \emph{\DW-ring} provided that  $\GV(R)=\{R\}$, or equivalently, the only  $\GV$-torsion $R$-module is $0$, or equivalently every $R$-module is a $w$-module. Examples of $\DW$ rings contain rings of  Krull dimension at most $1$, \Prufer\ domains and so on.  Moreover, the authors homologically characterized the class of $\DW$ rings.

\begin{proposition}\label{DW}\cite[Corollary 3.7]{zw23}
Let $R$ be a ring. Then $\fPD(R)\leq 1$ if and only if $R$ is a $\DW$ ring.
\end{proposition}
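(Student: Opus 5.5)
We plan to prove Proposition \ref{DW} by reducing both sides to the same ideal-theoretic condition via Corollary \ref{main}, taking $d=1$. By the equivalence (1)$\Leftrightarrow$(2) there, $\fPD(R)\leq 1$ holds if and only if every finitely generated ideal $I$ with $\Hom_R(R/I,R)=\Ext_R^0(R/I,R)=0$ and $\Ext_R^1(R/I,R)=0$ equals $R$.

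Next we compare this with the definitions recalled above. A finitely generated ideal $J$ is a $\GV$-ideal exactly when $\Hom_R(R/J,R)=\Ext_R^1(R/J,R)=0$. This is precisely the hypothesis on $I$ in Corollary \ref{main}(2) with $d=1$. Hence the condition in Corollary \ref{main}(2) says exactly that every $\GV$-ideal of $R$ is $R$, that is, $\GV(R)=\{R\}$. This is the defining property of a $\DW$-ring.

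Both directions then follow at once. If $\fPD(R)\leq 1$, any $J\in\GV(R)$ is a finitely generated ideal with $\Ext_R^i(R/J,R)=0$ for $i=0,1$, so $J=R$ and $R$ is a $\DW$-ring. Conversely, if $R$ is a $\DW$-ring and $I$ is finitely generated with $\Ext_R^i(R/I,R)=0$ for $i=0,1$, then $I\in\GV(R)=\{R\}$. Corollary \ref{main} then gives $\fPD(R)\leq1$.

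There is no real obstacle, since the work is contained in Corollary \ref{main}. The only point to check is that the notion of $\GV$-ideal used here (finitely generated, with vanishing $\Hom$ and $\Ext^1$ into $R$) matches verbatim the hypothesis of Corollary \ref{main}(2) at $d=1$. One must also note that $R$ itself trivially satisfies it, since $R/R=0$.
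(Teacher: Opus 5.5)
Your proposal is correct and follows essentially the same route as the cited source \cite[Corollary 3.7]{zw23}. You specialize the characterization in Corollary \ref{main}, which is \cite[Theorem 3.1]{zw23} and is proved independently of this proposition, so there is no circularity, to $d=1$, and then note that its hypothesis on $I$ is exactly the definition of a $\GV$-ideal, so the condition reads $\GV(R)=\{R\}$. If one instead starts from the original definition of a $\GV$-ideal ($R\to\Hom_R(J,R)$ an isomorphism), the long exact sequence obtained by applying $\Hom_R(-,R)$ to $0\to J\to R\to R/J\to 0$ shows it is equivalent to the condition $\Hom_R(R/J,R)=\Ext_R^1(R/J,R)=0$ used in the paper.
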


Recall from \cite{WQ19} that a $w$-module $M$ is said to be a \emph{strong $w$-module} if $\Ext_R^i(T,R)$ for
each integer $i\geq 2$ and for all GV-torsion modules $T$.

\begin{lemma}\label{allsw}
	Let $R$ be a ring. Then every $R$-module is a strong $w$-module if and only if $R$ is a $\DW$-ring.
\end{lemma}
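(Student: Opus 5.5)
The plan is to reduce both directions to the characterization of $\DW$-rings recalled just before Proposition \ref{DW}: $R$ is a $\DW$-ring if and only if the only $\GV$-torsion $R$-module is $0$, if and only if every $R$-module is a $w$-module. I read the definition of a strong $w$-module as requiring $\Ext_R^i(T,M)=0$ for every integer $i\geq 2$ and every $\GV$-torsion module $T$; the ``$R$'' in the displayed definition appears to be a typo for $M$. The argument does not really depend on this reading, since the vanishing condition is only used in the trivial direction.

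For the ``only if'' direction, I will suppose every $R$-module is a strong $w$-module. By definition, every strong $w$-module is in particular a $w$-module, so every $R$-module is a $w$-module. By the equivalent formulations of the $\DW$ property, $R$ is then a $\DW$-ring. Concretely: if $J\in\GV(R)$, then $R/J$ is $\GV$-torsion, since $J\cdot(R/J)=0$. But $R/J$ is also a $w$-module, hence $\GV$-torsion-free, so $R/J=0$ and $J=R$. Thus $\GV(R)=\{R\}$.

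For the ``if'' direction, I will suppose $R$ is a $\DW$-ring. Then every $R$-module $M$ is a $w$-module. Moreover, the only $\GV$-torsion module is $T=0$. This holds because any nonzero $x\in T$ is killed by some $J\in\GV(R)=\{R\}$, forcing $x=0$. Hence $\Ext_R^i(T,M)=\Ext_R^i(0,M)=0$ for all $i\geq 2$, and $M$ is a strong $w$-module.

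There is no genuine obstacle here; the proof is essentially an unwinding of definitions. The only point requiring care is making explicit why $\GV(R)=\{R\}$ forces every $\GV$-torsion module to vanish, and conversely, and I will spell this out in one line as above.
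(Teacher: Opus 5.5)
Your proof is correct and follows the same approach as the paper. The forward direction uses that a strong $w$-module is in particular a $w$-module, and the converse uses that over a $\DW$-ring the only $\GV$-torsion module is $0$, so the $\Ext$-vanishing condition holds trivially. Your explicit check that $\GV(R)=\{R\}$ and your reading of the definition as $\Ext_R^i(T,M)=0$ are accurate and do not affect the argument.
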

\begin{proof} Suppose  every $R$-module is a strong $w$-module, and hence a $w$-module. So $R$ is a $\DW$-ring.	On the other hand, assume that $R$ is a $\DW$-ring. Then the only $\GV$-torsion $R$-module is $0$. Hence every $R$-module is a  strong $w$-module.
\end{proof}

Furthermore, we have the following characterization of $\DW$-rings.
\begin{proposition}\label{Rsw}
	Let $R$ be a ring. Then $R$ itself is a  strong $w$-module if and only if $R$ is a \DW-ring.
\end{proposition}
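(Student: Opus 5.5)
One direction is immediate from Lemma \ref{allsw}. If $R$ is a \DW-ring, then every $R$-module is a strong $w$-module, and in particular $R$ is.

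For the converse, assume $R$ is a strong $w$-module. By definition $R$ is then a $w$-module, and
\[
\Ext_R^i(T,R)=0 \quad\text{for all } i\geq 2 \text{ and all GV-torsion modules } T.
\]
The plan is to show $\fPD(R)\leq 1$ and then conclude by Proposition \ref{DW}. For this I would use Theorem \ref{wnd} with $d=1$.

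So take a finitely generated ideal $I$ with $\Ext_R^0(R/I,R)=\Ext_R^1(R/I,R)=0$. Such an $I$ is a $\GV$-ideal by definition, i.e. $I\in\GV(R)$. The module $R/I$ is annihilated by $I$, so every element of $R/I$ is killed by a $\GV$-ideal. Hence $R/I$ is a GV-torsion module.

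The strong $w$-module hypothesis then gives $\Ext_R^i(R/I,R)=0$ for all $i\geq 2$. Together with the assumption for $i=0,1$, we get $\Ext_R^i(R/I,R)=0$ for all $i\geq 0$. Theorem \ref{wnd} now yields $\fPD(R)\leq 1$, and Proposition \ref{DW} says $R$ is a \DW-ring.

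There is no real obstacle. The only points to check are that $R/I$ is GV-torsion, which holds because $I$ is itself a $\GV$-ideal annihilating it, and that the definition of strong $w$-module is read as the vanishing $\Ext_R^i(T,R)=0$ for $i\geq 2$ (the word ``$=0$'' is missing from the recalled definition).
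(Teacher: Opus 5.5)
Your proposal is correct and follows the paper's proof essentially verbatim. The forward direction uses Lemma \ref{allsw}; for the converse, a finitely generated $I$ with $\Ext_R^0(R/I,R)=\Ext_R^1(R/I,R)=0$ is a $\GV$-ideal, the strong $w$-module hypothesis gives vanishing in all degrees, and Theorem \ref{wnd} with $d=1$ together with Proposition \ref{DW} finishes the argument. You also make explicit a step the paper leaves implicit, namely that $R/I$ is $\GV$-torsion because it is annihilated by the $\GV$-ideal $I$.
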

\begin{proof} Suppose $R$ is a \DW-ring. Then it follows by Lemma \ref{allsw} that $R$ itself is a  strong $w$-module.

On the other hand, let a finitely generated ideal $J$ of $R$ be a $\GV$-ideal, equivalently, $\Ext_R^i(R/J,R)=0$ for each $i=0,1$. Then, by assumption, $R$ is a  strong $w$-module.  So $\Ext_R^i(R/J,R)$ for	each integer $i\geq 0$. It follows by Theorem \ref{wnd} that $\fPD(R)\leq 1$. So $R$ is a  DW-ring by Proposition \ref{DW}.
\end{proof}

\begin{corollary}
	Let $R$ be a ring. Then  every $w$-module is a  strong $w$-module if and only if $R$ is a \DW-ring.
\end{corollary}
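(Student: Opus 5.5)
The plan is to prove the two directions separately. The backward direction is immediate from results already established. The forward direction reduces to Proposition~\ref{Rsw}, once we note that in a \DW-ring the $w$-modules are exactly all modules.

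Suppose first that every $w$-module is a strong $w$-module. The ring $R$ is itself a $w$-module. It is $\GV$-torsion-free: if $Jr=0$ with $J\in\GV(R)$, then $r$ defines a homomorphism $R/J\to R$, and $\Hom_R(R/J,R)=0$ forces $r=0$. It also satisfies $\Ext_R^1(R/J,R)=0$ for every $J\in\GV(R)$, directly from the definition of a $\GV$-ideal. Hence $R$ is a strong $w$-module by hypothesis, and Proposition~\ref{Rsw} gives that $R$ is a \DW-ring.

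Conversely, suppose $R$ is a \DW-ring. By Lemma~\ref{allsw}, every $R$-module is a strong $w$-module; in particular every $w$-module is.

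I do not expect a real obstacle here. The only point needing care is the verification that $R$ is a $w$-module, which is the short $\GV$-torsion-freeness argument given above.
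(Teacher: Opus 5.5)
Your proof is correct and follows the paper's own route. The paper likewise gets the forward direction from Proposition~\ref{Rsw} using the fact that $R$ is always a $w$-module, which you verify explicitly from $\Hom_R(R/J,R)=0$ and $\Ext_R^1(R/J,R)=0$, and the converse is immediate from Lemma~\ref{allsw}; the only blemish is that your opening summary swaps which direction uses Proposition~\ref{Rsw}, but the argument itself is right.
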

\begin{proof}
	It follows by Proposition \ref{Rsw} and $R$ always is a $w$-module.	
\end{proof}

Recently, Zhou \cite[Proposition 2]{ZKH26} showed that an integer domain $R$ with $\id_RR\leq 1$ is a $\DW$-ring. Now, we can generalize it to general commutative rings.
\begin{corollary}\label{idDW}
Let $R$ be a ring. If $\FP\id_RR\leq 1$, then $R$ is a $\DW$-ring.
\end{corollary}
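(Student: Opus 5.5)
The plan is to combine Theorem \ref{idRfpd}, which gives $\fPD(R)\leq \FP\id_RR$, with the characterization of $\DW$-rings in Proposition \ref{DW}. The assumption $\FP\id_RR\leq 1$ then gives $\fPD(R)\leq 1$ at once, and Proposition \ref{DW} says exactly that $R$ is a $\DW$-ring. So the proof should be essentially two lines.

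For a more self-contained route that avoids quoting Proposition \ref{DW}, I would argue directly with $\GV$-ideals. Let $J\in\GV(R)$, so that $J$ is finitely generated and $\Hom_R(R/J,R)=\Ext_R^1(R/J,R)=0$. The module $R/J$ is finitely presented, so the hypothesis $\FP\id_RR\leq 1$ gives $\Ext_R^i(R/J,R)=0$ for all $i\geq 2$. Therefore $\Ext_R^i(R/J,R)=0$ for all $i\geq 0$. Theorem \ref{wnd} with $d=1$ then shows $\fPD(R)\leq 1$, and by Corollary \ref{main} (equivalence of (1) and (2)) we get $J=R$. Hence $\GV(R)=\{R\}$, which means $R$ is a $\DW$-ring. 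Equivalently, this argument shows $R$ is a strong $w$-module, and Proposition \ref{Rsw} applies.

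I expect no real obstacle here. The only point to check is that the hypothesis covers degrees $i\geq 2$, which holds because $R/J$ is finitely presented whenever $J$ is finitely generated, so the definition of FP-injective dimension applies. I would also remark that this recovers Zhou's result for domains with $\id_RR\leq 1$, since $\FP\id_RR\leq \id_RR$.
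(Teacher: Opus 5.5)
Your main argument is exactly the paper's proof: the inequality $\fPD(R)\leq \FP\id_RR$ gives $\fPD(R)\leq 1$, and Proposition \ref{DW} then shows $R$ is a $\DW$-ring; your $\GV$-ideal route is a correct unpacking of those same two results. One small point concerns your closing aside. $\FP\id_RR\leq 1$ only gives $\Ext_R^i(T,R)=0$ for $i\geq 2$ when $T$ is finitely presented, such as $R/J$, whereas being a strong $w$-module requires this for arbitrary $\GV$-torsion $T$. So $R$ is a strong $w$-module only a posteriori, once $\GV(R)=\{R\}$ is known, and invoking Proposition \ref{Rsw} at that stage is unnecessary.
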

\begin{proof}
	It follows by Theorem \ref{idRfpd} and Proposition \ref{DW}.
\end{proof}

\begin{remark}
	The converse of Corollary \ref{idDW} is not true in general. Indeed, let $R=k[[x,y]]/(x^2,xy,y^2)$ with $k$ a field. Then $R$ is not Gorenstein, and so $\FP\id_RR=\id_RR=\infty$. However, K.$\dim(R)$=0,  so $\fPD(R)=0$ by \cite[Theorem 3.5]{z26}, and hence $R$ is a $\DW$-ring. This example also shows that $\fPD(R)$ and $\FP\id_RR$ can differ largely (see Theorem \ref{idRfpd}).
\end{remark}
\begin{remark}
	The main results of this subsection can be formally extend to $\DQ$-rings and strong Lucas modules (see \cite{z26-tqps}). Since the results and proofs are similar to those presented in this subsection, we omit them here.
\end{remark}

\subsection{Srong \Prufer\  rings VS \Prufer\ rings}  Recall  that a ring $R$ is said to be a \emph{\Prufer\ ring} (resp., \emph{strong \Prufer\ ring}) provided that every finitely generated regular (resp., semi-regular) ideal is projective. It was showed in \cite[Example 3.10]{zw23} and \cite[Example 6.4]{z26} that, for any $n\in\mathbb{N}\cup \{\infty\},$ there is a \Prufer\ ring has the small finitistic dimension $n$. However, it was proved in \cite{zw23} that every strong \Prufer\ ring has the small finitistic dimension at most $1$.  So we have the following result.

\begin{corollary}
Every strong \Prufer\ ring  is a  \Prufer\ $\DW$-ring.
\end{corollary}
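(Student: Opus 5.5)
The claim has two parts: a strong \Prufer\ ring is \Prufer, and it is a $\DW$-ring.

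For the \Prufer\ part, recall that a regular ideal contains a non-zero-divisor. Such an ideal is semi-regular, i.e. it contains a finitely generated subideal with zero annihilator: the principal ideal generated by the non-zero-divisor will do. Hence every finitely generated regular ideal is a finitely generated semi-regular ideal. By the strong \Prufer\ hypothesis it is then projective, so $R$ is a \Prufer\ ring.

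For the $\DW$ part, I would use the result quoted just before the statement from \cite{zw23}: every strong \Prufer\ ring $R$ satisfies $\fPD(R)\leq 1$. Proposition \ref{DW} says that $\fPD(R)\leq 1$ holds exactly when $R$ is a $\DW$-ring, so $R$ is a $\DW$-ring.

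If one wants a self-contained argument for the $\DW$ part, I would argue directly from Corollary \ref{main}(2) with $d=1$. Let $J$ be a finitely generated ideal with $\Hom_R(R/J,R)=\Ext^1_R(R/J,R)=0$.
\begin{enumerate}
\item From $\Hom_R(R/J,R)=0$ we get $\mathrm{ann}(J)=0$, so $J$ is finitely generated and semi-regular.
\item By the strong \Prufer\ hypothesis, $J$ is projective, so $\pd_R(R/J)\leq 1$ and $\Ext^i_R(R/J,R)=0$ for all $i\geq 2$.
\item Hence $\Ext^i_R(R/J,R)=0$ for all $i\geq 0$, and Theorem \ref{wnd} with $d=1$ gives $\fPD(R)\leq 1$.
\end{enumerate}

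The only step that needs care is step 1, identifying $\mathrm{ann}(J)=0$ with semi-regularity of $J$. This is immediate because $J$ is finitely generated: it is itself a finitely generated subideal of $J$ with zero annihilator.
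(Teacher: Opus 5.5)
Your main argument is the paper's: the \Prufer\ property holds because a finitely generated regular ideal is semi-regular, and the $\DW$ property follows from the result of \cite{zw23} that strong \Prufer\ rings satisfy $\fPD(R)\leq 1$, combined with Proposition \ref{DW}. Your self-contained variant is also correct and in fact recovers that cited bound from Theorem \ref{wnd}: a $\GV$-ideal $J$ has $\mathrm{ann}(J)=0$, hence is projective, hence $\pd_R(R/J)\leq 1$, so all higher $\Ext$ vanish. (Note that it invokes Theorem \ref{wnd}, not Corollary \ref{main}(2) as you announced.)
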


A natural question is: 
\begin{center}
\textbf{Question:} 	Is every \Prufer\ $\DW$-ring a strong \Prufer\ ring?
\end{center}
Finally, we deny this question by a counterexample.

\begin{example}
Let $D=k[x,y]$ be the polynomial ring in two variables over a field $k$, and let $Q$ be its quotient field. Set $R=D(+)Q/D$ the idealization of $D$ by $Q/D$ (see \cite{AW09}).  Then $R$ is a total ring of quotients, and hence a \Prufer\ ring. It follows by by \cite[Corollary 6.3]{z26} that 
$\fPD(R)=1$ , and so $R$ is a $\DW$-ring by Proposition \ref{DW}.

Next, we will show that $R$ is not a strong \Prufer\ ring.  We claim that the maximal ideal $\m:=\langle x,y\rangle (+)Q/D$ of $R$ is non-projective semiregular. Indeed, assume $\m(r,\frac{a}{b}+D)=0$, then $r=0$, and $b|x$ and $b|y$. So $b$ is a unit in $D$, and hence $(r,\frac{a}{b}+D)=0$. Consequently, $\m$ is semiregular. Since $\m_{\m}$ is not a principal ideal of $R_{\m}$. Then the finitely generated semiregular ideal $\m$ of $R$ is not projective (or equivalently $Q_0$-invertible) (see \cite[Page 64]{L93}). So $R$ is not a strong \Prufer\ ring. 

In conclusion, $R$ is a \Prufer\ $\DW$-ring but not a strong \Prufer\ ring.
\end{example}

\end{document}